\theoremstyle{plain}
\newtheorem{thm}{Theorem}
\newtheorem {pr}[thm]{Proposition}
\newtheorem {eg}[thm]{Example}
\newcommand{\edit}[1]{\textcolor{red}{EDIT: \texttt{#1}}} 
\bmdefine{\ww}{w}
\bmdefine{\xx}{x}
\bmdefine{\yy}{y}
\bmdefine{\zz}{z}
\bmdefine{\rr}{r}
\def\zero{\mathbf{0}} % all zeros
\begin{document}

%% define your title in the usual way
\title[Multivariate algebraic asymptotics]{Asymptotics of coefficients of algebraic series via embedding into rational series (extended abstract)}

%% define your authors in the usual way
%% use \addressmark{1}, \addressmark{2} etc for the institutions, and use \thanks{} for contact details
%\author[Optional shorter names]{Longer names me\thanks{\href{mailto:hello@world.c}{hello@world.c}. Longer names me was partially supported by Grant 2017.11.14.$\partial$\;supp.}\addressmark{1}, \and you\addressmark{2}}

\author{Torin Greenwood
\and
Stephen Melczer
\and
Tiadora Ruza
\and
Mark C. Wilson
}

\newcommand{\Addresses}{{% additional braces for segregating \footnotesize
  \bigskip
  \footnotesize
  T.~Greenwood, \textsc{Department of Mathematics, North Dakota State University}\par\nopagebreak \textit{E-mail address}: \href{mailto:torin.greenwood@ndsu.edu}{torin.greenwood@ndsu.edu}
  
  \medskip
  
  S.~Melczer, \textsc{Department of Combinatorics and Optimization, University of Waterloo}
  
  \medskip
  
  T.~Ruza, \textsc{Department of Combinatorics and Optimization, University of Waterloo}
  
  \medskip
  
  M.C.~Wilson, \textsc{Department of Mathematics \& Statistics, University of Massachusetts Amherst}\par\nopagebreak\textit{E-mail address}: \href{mailto:markwilson@umass.edu}{markwilson@umass.edu}
  
%
%   R.~Campbell, \textsc{Department of Mathematics, Pennsylvania State University,
%     Pittsburgh, Pennsylvania 13593}\par\nopagebreak
%   \textit{E-mail address}, R.~Campbell: \texttt{campr@galois.psu.edu}

%   \medskip

%   M.~Dane (Corresponding author), \textsc{Atmospheric Research Station,
%     Pala Lundi, Fiji}\par\nopagebreak
%   \textit{E-mail address}, M.~Dane: \texttt{DaneMark@@ffr.choice}

%   \medskip

%   J.~Jones, \textsc{Department of Philosophy, Freedman College,
%     Periwinkle, Colorado 84320}\par\nopagebreak
%   \textit{E-mail address}, J.~Jones: \texttt{id739e@@oseoi44 (Bitnet)}
%
}}

%\author{Torin Greenwood\thanks{\href{mailto:torin.greenwood@ndsu.edu}{torin.greenwood@ndsu.edu}}\addressmark{1}, Stephen Melczer\addressmark{2}, Tiadora Ruza\addressmark{2}, \and Mark C. Wilson\thanks{\href{mailto:markwilson@umass.edu}{markwilson@umass.edu}}\addressmark{3}}

%% then use \addressmark to match authors to institutions here
%\address{\addressmark{1}Department of Mathematics, North Dakota State University \\ \addressmark{2}Department of Combinatorics and Optimization, University of Waterloo \\
%\addressmark{3}Department of Mathematics \& Statistics, University of Massachusetts Amherst }

%% put the date of submission here
%\received{\today}

%% leave this blank until submitting a revised version
%\revised{}

%% put your English abstract here, or comment this out if you don't have one yet
%% please don't use custom commands in your abstract / resume, as these will be displayed online
%% likewise for citations -- please don't use \cite, and instead write out your citation as something like (author year)
\begin{abstract} We present a strategy for computing asymptotics of coefficients of $d$-variate algebraic generating functions. Using known constructions, we embed the coefficient array into an array represented by a rational generating functions in $d+1$ variables, and then apply ACSV theory to analyse the latter. This method allows us to give systematic results in the multivariate case, seems more promising than trying to derive analogs of the rational ACSV theory for algebraic GFs, and gives the prospect of further improvements as embedding methods are studied in more detail.\\
{\bf Keywords: generating function, ACSV, analytic combinatorics in several variables} 
\end{abstract}
%
%\resumetitle{R\'{e}sum\'{e}}
%%% put your French abstract here, or comment this out if you don't have one
%\resume{Nous présentons une stratégie pour la computation des asymptotiques des coefficients de $d$-variable fonctions génératrices algébriques. En utilisant des constructions connues, nous imbriquons le tableau des coefficients dans un tableau qui est représenté par une fonction génératrice rationnelle en $d+1$ variables et puis nous appliquons la théorie ACSV pour analyser ce dernier. Cette méthode nous permettons de donner des résultats systématiques dans le cas multivarié, semble plus prometteuse qu’essayant de dériver un équivalent à la théorie ACSV rationnelle pour les fonctions génératrices algébriques et elle donne la perspective aux améliorations futurs à mesure que les méthodes d’imbrications sont étudiées en plus détailles.}
%
%% put your keywords here, or comment this out if you don't have them yet
%\keywords{ generating function, ACSV, analytic combinatorics in several variables}

%% you can include your bibliography however you want, but using an external .bib file is STRONGLY RECOMMENDED and will make the editor's life much easier
%% regardless of how you do it, please use numerical citations, ie. [xx, yy] in the text

%% this sample uses biblatex, which (among other things) takes care of URLs in a more flexible way than bibtex
%% but you can use bibtex if you want

%\begin{document}

\maketitle

\section{Introduction}
\label{s:intro}

Algebraic generating functions (GFs) occur throughout combinatorics and its applications. Combinatorial situations that frequently lead to algebraic GFs include: recursive structures described by context-free languages, such as binary trees and constrained (random) walks; RNA secondary structures; applications of the kernel method, the quadratic method for maps, and generalizations. %\cite{BoJe2006}. 
Our catalog of test problems \cite{GrMW2021} lists 20 algebraic GFs occurring in the recent research literature, 19 of which are multivariate.

A classic univariate example is the Catalan GF
$$
\frac{1 - \sqrt{1 - 4x}}{2x} = \sum_{n\geq 0} a_n x^n
$$
where $a_n$ can be interpreted as the number of  noncrossing partitions of $\{1, \dots, n\}$, binary trees with $n$ nodes, rooted ordered trees with $n$ edges, Dyck paths of semilength $n$, etc. Asymptotics of coefficients of univariate algebraic GFs, such as this one, can be derived systematically via the local  \emph{singularity analysis} of Flajolet and Odlyzko \cite{FlOd1990} combined with a global analysis of singularities of the GF \cite{FlSe2009, Chab2002}.
Thus the Catalan GF is easily shown to have $n$th coefficient asymptotic to $4^{n}/\sqrt{\pi n^3}$, which in this simple case can be verified  using the explicit formula $a_n = (2n)!/((n+1)!n!)$ and Stirling's approximation.

However, in more than one variable, there is no such general procedure, and there are many situations where we genuinely want asymptotic information about specific coefficients of multivariate GFs. For example, a Schr\"{o}der tree is a rooted plane tree on $n$ leaves where each non-leaf vertex has at least two children.  The GF encoding the number $a_{m, n}$ of Schr\"{o}der trees with $m$ leaves and $n$ vertices satisfies the algebraic equation
$$V(x, y) := \sum_{m, n = 0}^\infty a_{m, n} x^my^n = xy + \frac{y(V(x,y))^2}{1- V (x, y)}.$$
For large {$m$ and $n$}, approximately how many trees with {$m$ leaves and $n$ vertices} are there? 

The most obvious method of attack on such a problem is to try to mimic the custom contour of integration used by Flajolet and Odlyzko. Apart from the PhD thesis work of Greenwood~\cite{Gree2018}, little has been done in this direction. For the results of that substantial work to apply, an explicit formula for the GF in the form $H^{-\beta}$ is required for some $\beta \not\in \mathbb{Z}_{< 0}$.  The dominant singularity of $1/H$ must be a smooth minimal critical point, $H$ must be analytic near the origin, and $H$ must satisfy some partial derivative constraints at the critical point. Creating a general method from this seems rather daunting.  {The full version of this paper will include examples where this method will not apply.}

Multivariate asymptotics are considerably more difficult to derive than in the univariate case, owing to the much greater range of geometries for the singular set of the GF. Even rational functions prove highly nontrivial, in sharp contrast to their algorithmic analysis in the univariate case. Previous work by Bender, Richmond and Gao \cite{BeRi1983, GaRi1992} deals with limit laws for multivariate situations including algebraic GFs, but does not address the detailed asymptotic questions we need here (corresponding in their model to estimating the probability generating function).
However, the long-running ACSV program \cite{PeWi2013} 
%started by Pemantle and Wilson
has by now made rational multivariate GFs quite manageable in many cases. Our main approach here is to 
% \MW{use known results to}
embed the array of coefficients of an algebraic GF into a higher-dimensional array represented by a rational function, and then use the existing ACSV theory to derive asymptotics. This idea was suggested in  \cite{RaWi2012a} but has barely been explored since then. 
% \MW{As usual in mathematics, the law of conservation of difficulty applies, and this method is far from being a panacea.  However, it works surprisingly well in many cases.}
%
Here we make progress toward a systematic method, by classifying all examples in our recently collated test problem collection \cite{GrMW2021}, and presenting complete results in Section \ref{s:examples} for some of them. Complete details are in the accompanying Sage worksheet, \url{http://acsvproject.com/AlgSage.ipynb}. {To compute asymptotics for the GF $f(\mathbf{x})$:
\begin{enumerate}
    \item {\bf Preprocess.}  Adding finitely many terms to $f$ or making a variable substitution may help attain necessary embedding conditions, and may lead to nicer \emph{combinatorial} embeddings, where all coefficients are non-negative.  (See Section \ref{ss:preprocess}.)
    % modifying $f(\mathbf{x})$ by changing a finite number of leading terms or making a variable substitution can lead to an embedding that is easier to analyze.  Preprocessing can help find \emph{combinatorial} embeddings, where all coefficients in the embedding are non-negative.  (See \cref{ss:preprocess}.)
    \item {\bf Embed.} Using Proposition \ref{pr:saf1}, encode the coefficients of $f(\mathbf{x})$ as the elementary diagonal of a rational function $F(Y, \mathbf{x})$ in one additional variable.  This step requires that $f$ is divisible by some variable occurring in $f$ and that the minimal polynomial for $f$ has a non-zero derivative at the origin.  (See Section \ref{ss:embed}.)
    % \item {\bf Apply ACSV.}
    % \begin{enumerate}
        \item {\bf Identify critical points.} If $F(Y, \mathbf{x}) = G(Y, \mathbf{x})/H(Y, \mathbf{x})$, use a system of polynomial equations derived from $H$ to identify critical points that may contribute to the asymptotics.  Because of space, we restrict to the case where we can find an embedding where $1/H(Y, \mathbf{x})$ is combinatorial, allowing us to identify minimal critical points more easily.
        % verify more easily when critical points are minimal.  
        We leave more advanced ACSV issues, such as noncombinatorial embeddings and critical points at infinity, to follow-up work. (See Section \ref{ss:ACSV}.)
        \item {\bf Compute asymptotics.} Once the contributing critical points are identified, asymptotics may be computed algorithmically.  (See our Sage worksheet.)
    % \end{enumerate}
\end{enumerate}

}

\section{Basic theory}
\label{s:theory}

Let $F(\xx) = \sum_{\rr} a_\rr \xx^\rr$ be a GF in $d$ variables that is algebraic with minimal polynomial $P(Y, \xx)$. 
%Thus 
%$$
%P(F(\xx), \xx) = 0.
%$$
For the Catalan example above, $P(Y,x) = xY^2 - Y + 1$%
% , \MW{and for the Schr\"{o}der tree example, $P(Y, x, y) = (y + 1)Y^2 - (xy + 1)Y + xy$}
.
% {The Narayana numbers in Example 4 below are a refinement of the Catalan numbers, with $P(Y,x,y) = xY^2 - Y(1-x(y-1)) + 1$. As expected, this specializes to the Catalan case on setting $y=1$.}

\subsection{Embedding using rational GFs}
\label{ss:embed}

%The entire method would be of no use if we could not embed using rational functions. Fortunately, there is a commonly occurring special case. 
% \MW{It has long been known that the diagonal $\Delta F(t): = \sum_{n} a_{nn} t^n$ of a bivariate rational GF $F(x,y) = \sum_{r,s} a_{rs} x^r y^s$ is always algebraic \cite{Stan1999}. There is a less well known inverse to the diagonal mapping.} 

\begin{pr}[\cite{Furs1967}]
\label{pr:furst}
Suppose that $f$ is a univariate algebraic power series defined by the polynomial $P(Y,x)$, that $f(0) = 0$, and that $\partial P/\partial Y(0,0) \neq 0$. Then $f$ is the diagonal $\Delta F$ of 
the power series 
$$
F(Y, x):= \frac{Y^2P_Y(Y,Yx)}{P(Y, Yx)}.
$$
\end{pr}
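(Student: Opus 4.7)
The plan is to evaluate $\Delta F$ using the standard contour-integral representation of the diagonal and then apply the residue theorem to the logarithmic derivative $P_Y/P$.

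First I would verify that $F(Y,x) = Y^2 P_Y(Y,Yx)/P(Y,Yx)$ is analytic at the origin, so its diagonal is well-defined. Since $P(0,0) = P(f(0),0) = 0$, the denominator $P(Y,Yx)$ vanishes at the origin, but a short calculation using $P_Y(0,0) \neq 0$ shows it vanishes to exactly first order in $Y$, giving a factorization $P(Y,Yx) = Y\,R(Y,x)$ with $R(0,0) = P_Y(0,0) \neq 0$. Then $F(Y,x) = Y\,P_Y(Y,Yx)/R(Y,x)$ is analytic near $(0,0)$.

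Next I would apply the standard diagonal-extraction formula
$$\Delta F(t) = \frac{1}{2\pi i}\oint_{|Y|=\rho} F(Y, t/Y)\,\frac{dY}{Y}$$
for sufficiently small $\rho$ and $|t|$. The substitution $x = t/Y$ produces the crucial simplification $F(Y, t/Y) = Y^2 P_Y(Y,t)/P(Y,t)$, so after cancelling a factor of $Y$ the diagonal collapses to
$$\Delta F(t) = \frac{1}{2\pi i}\oint_{|Y|=\rho} \frac{Y\, P_Y(Y,t)}{P(Y,t)}\,dY,$$
an integral of the form $\oint Y \cdot \partial_Y \log P(Y,t)\, dY$. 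By the residue theorem it equals the sum of $Y^*$ over zeros $Y^*$ of $P(\cdot, t)$ inside the contour (weighted by multiplicity).

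By the implicit function theorem applied at $(0,0)$ with $P_Y(0,0) \neq 0$, there is a unique analytic branch $Y = f(t)$ with $f(0) = 0$ giving a simple zero of $P(\cdot, t)$ near the origin. For $\rho$ small enough and $|t|$ correspondingly small, continuity of roots ensures this is the \emph{only} zero of $P(\cdot,t)$ in $|Y|<\rho$, and the residue at this simple zero equals $f(t)\cdot P_Y(f(t),t)/P_Y(f(t),t) = f(t)$, yielding $\Delta F(t) = f(t)$. The main obstacle I expect is merely the careful coordination of $\rho$ with the range of admissible $|t|$ so that (i) $F(Y,t/Y)$ is analytic on the contour and the diagonal formula is valid, and (ii) the contour encloses exactly the single zero $Y = f(t)$ and no other roots of $P(\cdot,t)$; both are standard but require some bookkeeping to pin down precisely.
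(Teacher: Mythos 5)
The paper does not include a proof of this proposition; it is stated as a cited result from Furstenberg's 1967 paper, with only a short remark afterward explaining why the two hypotheses are necessary. Your argument is the standard (and essentially Furstenberg's) contour-integral proof: the substitution $x = t/Y$ collapses the diagonal integral to $\frac{1}{2\pi i}\oint \frac{Y\,P_Y(Y,t)}{P(Y,t)}\,dY$, and the residue theorem applied to this logarithmic-derivative integrand picks out exactly the root $Y = f(t)$ near the origin. Your calculation of the residue and your use of the two hypotheses — $f(0)=0$ to ensure $F$ is a genuine power series after cancelling a factor of $Y$ from $P(Y,Yx)$, and $P_Y(0,0)\neq 0$ to isolate a single simple branch via the implicit function theorem — are both correct and match the role the paper attributes to these hypotheses in its remark following the statement. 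So this is a correct proof that recovers the cited result by the standard route.
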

The condition that {$f(0) = 0$} is necessary for the formula, since the constant term of 
$Y^2P_Y(Y,Yx)$ is zero. Also, $\partial P/\partial Y(0,0) \neq 0$ is necessary: it says  that the algebraic function defined by $P$ has a single branch of multiplicity $1$ that passes through the origin. 

\begin{eg}
\label{eg:catalan}
The Catalan GF has nonzero constant term, so there are two obvious ways to apply Proposition \ref{pr:furst}. Subtracting the constant term yields the embedding into 
$$
F_1(Y,x) = \frac{Y\left(1 - 2Y^2x - 2Yx\right)}{1 - (Y^2x + 2Yx + x)}
$$
We could instead multiply by $x$ to obtain the shifted Catalan GF whose $n$th coefficient is $a_{n-1}$. This yields an embedding into
$$
F_2(Y,x) = \frac{Y(1-2Y)}{1-Y-x}.
$$
In other words, $a_n$ is the $(n,n)$ coefficient of $F_1$ and the $(n+1,n+1)$ coefficient of $F_2$.
\end{eg}

There is a substantial lack of uniqueness in the embedding, since adding any bivariate rational GF with zero diagonal will not change the diagonal.  

Proposition \ref{pr:furst} in fact generalizes to a less well known result in an arbitrary number of variables, which also allows for a single branch of higher multiplicity. When $d\geq 3$ there are various notions of diagonal. {The \emph{elementary diagonal} of a GF $F(\mathbf{x})$ is the $(d-1)$-variate GF encoding the coefficients from $F$ where $x_1$ and $x_2$ have matching powers, $\sum_{\mathbf{r}} a_{r_2, r_2, r_3, \ldots, r_{d}} x_2^{r_2} x_3^{r_3} \cdots x_{d}^{r_{d}}$.}
% Note: this is defined in Pemantle and Wilson, Definition 2.4.9 on page 41:
% https://www2.math.upenn.edu/~pemantle/papers/ACSV.pdf
% I did not go with the phrasing there because it seemed more general than what we need.
% The \emph{elementary diagonal} of $F$ is the univariate GF $\sum_{n\geq 0} a_{n,n,\dots, n}t^n$.

\begin{pr}[\cite{Safo2000}, Lemma {2}]
\label{pr:saf1}
Suppose that $f$ is an algebraic power series given as a branch of
$P(f(\xx), \xx) = 0$, that $f$ is divisible by $x_1$ and that in some neighborhood of $\zero$, there is a factorization $P(Y, \xx) = (Y - f(\xx))^k u(Y, \xx)$ where $u(0,
\zero) \neq 0$ and $k \geq 1$ is an integer.

Then $f$ is the elementary diagonal of the rational function $F$ given by
$$
F(Y, \xx) = 
\frac{Y^2 P_Y(Y,Yx_1, x_2, \dots, x_d)}{k P(Y,Yx_1, x_2, \dots, x_d)}.
$$
%where by $P_1$ we mean the partial derivative of $P$ with respect
%to the variable $Y$.
\end{pr}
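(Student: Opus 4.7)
The plan is to mimic Furstenberg's original univariate proof: first write $f(\mathbf{x})$ as a residue in the single complex variable $Y$, and then recognize that residue as the elementary diagonal of $F$ via a change of variables in the integral. Set $G(Y, \mathbf{x}) := Y P_Y(Y, \mathbf{x}) / (k P(Y, \mathbf{x}))$. The factorization $P = (Y-f)^k u$ with $u(0, \mathbf{0}) \neq 0$ yields the logarithmic-derivative identity
$$G(Y, \mathbf{x}) = \frac{Y}{Y - f(\mathbf{x})} + \frac{Y\, u_Y(Y, \mathbf{x})}{k\, u(Y, \mathbf{x})},$$
whose residue at $Y = f(\mathbf{x})$ is $f(\mathbf{x})$: the second summand is holomorphic at $(f(\mathbf{x}), \mathbf{x})$ for $\mathbf{x}$ near $\mathbf{0}$ by nonvanishing of $u$. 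Any further zeros of $P$ in $Y$ come from $u$ and stay bounded away from the origin by continuity, so for $\mathbf{x}$ in a sufficiently small neighborhood of $\mathbf{0}$ and $\epsilon > 0$ small,
$$f(\mathbf{x}) = \frac{1}{2\pi i} \oint_{|Y| = \epsilon} G(Y, \mathbf{x}) \, dY. \quad (\star)$$

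Next, I reinterpret the elementary diagonal of $F$ through the standard contour-integral formula
$$\mathrm{EDiag}(F)(\mathbf{x}) = \frac{1}{2\pi i} \oint_{|Y| = \epsilon} \frac{F(Y, x_1/Y, x_2, \dots, x_d)}{Y} \, dY,$$
which extracts the coefficient of $Y^0$ in the formal Laurent expansion of $F(Y, x_1/Y, x_2, \dots, x_d)$. A direct substitution shows that the factors $Y$ in the $x_1$-slots of $P$ and $P_Y$ cancel against the $1/Y$ coming from $x_1/Y$, so
$$\frac{F(Y, x_1/Y, x_2, \dots, x_d)}{Y} = \frac{Y P_Y(Y, x_1, \dots, x_d)}{k P(Y, x_1, \dots, x_d)} = G(Y, \mathbf{x}).$$
Combined with $(\star)$, this gives $\mathrm{EDiag}(F) = f$, as required.

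For these formal and analytic manipulations to be meaningful one first needs $F$ itself to be a genuine power series at the origin, and here the hypothesis $x_1 \mid f$ is essential. Writing $f = x_1 g$ and substituting $x_1 \to Y x_1$ in the factorization of $P$ produces $(Y - Y x_1 g)^k = Y^k (1 - x_1 g)^k$, so $P(Y, Y x_1, x_2, \dots, x_d)$ acquires a factor $Y^k$. This is canceled precisely by the $Y^{k-1}$ extracted from $P_Y$ via the product rule together with the $Y^2$ in the numerator of $F$. What remains has denominator equal, up to factors nonvanishing at the origin, to $u(Y, Y x_1, x_2, \dots, x_d)$, which is nonzero at $(0, \mathbf{0})$ by hypothesis; hence $F$ is holomorphic at the origin.

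The main technical obstacle is the coordination between three levels — the formal elementary-diagonal operation, the analytic residue realizing it, and the factorization-based residue evaluation — together with verifying that a single $\epsilon > 0$ can be chosen to work uniformly for all $\mathbf{x}$ in a neighborhood of $\mathbf{0}$. The uniform choice reduces to a continuity and compactness argument: zeros of $u(Y, \mathbf{x})$ are bounded away from $Y = 0$ for $\mathbf{x}$ small, since $u(0, \mathbf{0}) \neq 0$. Once this is handled, the proof reduces to the two clean identities displayed above.
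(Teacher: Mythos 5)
Your proof is correct and follows the standard residue argument that goes back to Furstenberg and underlies Safonov's Lemma 2; the paper itself states the result with a citation and gives no proof, so there is nothing in the paper to diverge from. The two key identities --- the logarithmic-derivative decomposition $G = \tfrac{Y}{Y-f} + \tfrac{Y u_Y}{k u}$ isolating the residue $f(\mathbf{x})$, and the collapse $F(Y, x_1/Y, x_2,\dots,x_d)/Y = G(Y,\mathbf{x})$ --- are exactly right, and your use of $f(\mathbf{0})=0$ to conclude $u(f(\mathbf{x}),\mathbf{x}) \neq 0$ near the origin and to keep the $u$-zeros outside the contour is the correct mechanism. One small imprecision: the numerator $Y^2 P_Y(Y,Yx_1,\dots)$ produces $Y^{k+1}$ (not exactly $Y^k$), leaving one factor of $Y$ after canceling the $Y^k$ in the denominator, so the cancellation is not ``precise''; but this residual $Y$ is expected (and harmless), since $F$ must have zero constant term because $f(\mathbf{0})=0$. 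Otherwise the argument is complete, modulo the routine verification that a single $\epsilon$ can be chosen to lie simultaneously in the annulus of Laurent convergence for $F(Y,x_1/Y,\dots)$, inside the neighborhood where the factorization of $P$ is valid, and large enough to enclose $f(\mathbf{x})$ --- which you correctly flag as a compactness/continuity matter.
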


\begin{eg}
\label{eg:narayana}
A bivariate refinement of the Catalan GF is the Narayana GF
$$
G(x,y):= \frac{1}{2x}{\left( 1 - x(y-1)
   - \sqrt{1 - 2x(y + 1) + x^2(y -1)^2} \right)}
$$
which enumerates noncrossing partitions by set size and number of blocks, rooted ordered trees by edges and leaves, Dyck paths by semilength and number of peaks, etc.  {The minimal polynomial $P(Y,x,y) = xY^2 - Y(1-x(y-1)) + 1$ specializes to the Catalan case on setting $y=1$.}

Now $G(x,y) - 1$ satisfies the hypotheses of Proposition \ref{pr:saf1} with $k=1$ (with respect to the variable $x$), and hence embeds into
$$
\frac{\left(1 - (2Y^2x + Yxy + Yx)\right)Y}{\left(1 - \left(Y^2x + Yxy + Yx + xy\right)\right)}.
$$
As expected, if we set $y = 1$, this GF specializes to $F_1$ in Example \ref{eg:catalan}. Similarly, if we instead multiply by $x$ before embedding, we obtain
$$
 \frac{Y \left(1 - 2Y - Yx(y-1)\right)}{1 - x - Y - Y x(y-1)}
$$
which specializes to $F_2$ in Example \ref{eg:catalan}.
\end{eg}

More generally, we must separate the branches of the algebraic function, which requires more work, undertaken for example in \cite{Safo2000}. We intend to consider this case in a companion paper. For now, we concentrate only on cases where the hypotheses of Proposition \ref{pr:saf1} are met, which turns out to be almost all of the examples in \cite{GrMW2021}. 
%We now move on describe the basic results needed to derive asymptotics after the embedding into rational GF has been completed.

\subsection{Basics of ACSV}
\label{ss:ACSV}
We give a very brief overview of this (by now standard) material, and refer to \cite{PeWi2013} and \cite{Melc2021} for full details.
Given a rational $d$-variate GF in the form $F = G/H$ with a power series expansion at the origin, we may often derive asymptotics as follows. First represent the coefficient of $\xx^\rr := x_1^{r_1} \cdots x_d^{r_d}$ via the Cauchy Integral Formula with domain of integration a small torus (product of circles) centered at the origin. We then expand this torus via a homothety until we reach a \emph{minimal critical point} (a type of point with algorithmically checkable properties) lying on the boundary of the domain of convergence. Replacing by a local residue integral we may evaluate this using stationary phase methods. In the case where the minimal critical point {$\mathbf{w}$} is a smooth point of the variety $H = 0$,  this leads directly to an asymptotic expansion {for $\mathbf{r} = n \cdot \hat{\mathbf{r}}$ with $\hat{\mathbf{r}} \in (0, \infty)^d$ fixed as $n \to \infty$}
$$
{\left[\mathbf{x}^{\mathbf{r}}\right] F(\mathbf{x}) \sim \mathbf{w}^{-\mathbf{r}}} \sum_{k\geq 0} a_k |\rr|^{(1-d)/2-k}
$$
where the coefficients can be determined algorithmically in terms of derivatives of $G$ and $H$, {and $\mathbf{w}$ and $a_k$ vary with $\hat{\mathbf{r}}$}. The simplest explicit formula in terms of $G$ and $H$ is only for $k = 0$, but the rational GFs we obtain via the embedding procedures above always have $a_0 = 0$ \cite{RaWi2012b}. Computer algebra implementations computing coefficients $a_k$ are built into Sage \cite{Raic}.

Another key observation is that if $1/H$ is \emph{combinatorial} (all its coefficients are nonnegative), then minimal singularities exist and for each minimal critical point $\ww$, the point $(|w_1|, \dots |w_d|)$ is also a minimal critical point. If the series is \emph{aperiodic} (the {subgroup} generated by the support is all of $\mathbb{Z}^d$) then there is a unique minimal critical point. In particular, if $H = 1 - K$ where $K$ is an aperiodic polynomial with nonnegative coefficients, which is the case in many of our examples below, for each direction there is a unique minimal critical point supplying asymptotics in that direction, and it is strictly minimal and lies in the positive orthant.

In the general noncombinatorial case, it can be hard to determine the contributing points.  Minimal points are still desirable because the contour shifting implicit in the above description is still guaranteed to work in the noncombinatorial case. However minimal points need not exist, and algorithmically determining them is considerably harder. Much recent work has been done, using Morse theory, to deal with this case. In this article we discuss only {examples that can be manipulated until a combinatorial embedding is found}, leaving the asymptotic analysis for {noncombinatorial cases for} a future article.

\section{Worked examples}
\label{s:examples}
We categorize the examples in \cite{GrMW2021} according to the type of embedding obtained. We present a single illustrative example in each category in full detail, and give less detail in other cases. Full computations are given in the accompanying Sage worksheet. 

\subsection{Notes on preprocessing}
\label{ss:preprocess}

Assuming that we have an algebraic power series $f$, there are two main hypotheses needed before we can embed using Proposition \ref{pr:saf1}. They are: (H1) $f$ is divisible by some variable occurring in $f$ and (H2) $\partial P/\partial Y$ does not vanish at the origin.

If $f$ does not vanish at the origin, then H1 cannot apply. In the univariate case, we can remedy this by subtracting the constant term or multiplying by the variable, as we showed above for the Catalan example.  More generally, for the first approach we can subtract initial terms of the power series expansion of $f$. The following result gives a simple necessary condition for this to work.  {Let $\xx_j^\circ = (x_1, \ldots, x_{j - 1}, 0, x_{j + 1}, \ldots, x_d)$.}

\begin{pr}
\label{pr:necessary}
{If $f - f_0$ satisfies H1 with respect to the variable {$x_j$}, for some polynomial $f_0$, then {$f(\xx_j^\circ)$} is a polynomial.}
\end{pr}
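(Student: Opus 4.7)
The plan is to read off the proposition directly from the definition of divisibility for power series. In the ring of formal (or convergent) multivariate power series, a series $g$ is divisible by $x_j$ if and only if every monomial of $g$ has positive $x_j$-degree, which is equivalent to the vanishing of the specialization $g(\xx_j^\circ) = 0$. Thus hypothesis H1 for $f - f_0$ with respect to $x_j$ amounts to the single condition $(f - f_0)(\xx_j^\circ) = 0$.

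With this reformulation in hand, the proof reduces to one line. First I would write $f - f_0 = x_j \cdot g$ for some power series $g$, then set $x_j = 0$ on both sides to obtain $f(\xx_j^\circ) = f_0(\xx_j^\circ)$. Since $f_0$ is a polynomial by hypothesis, its specialization $f_0(\xx_j^\circ)$ (obtained by substituting $0$ for $x_j$) is also a polynomial, so $f(\xx_j^\circ)$ is a polynomial as claimed.

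There is no real obstacle — the proposition is deliberately a lightweight necessary condition intended to serve as an easy-to-check obstruction: if substituting $x_j = 0$ into $f$ produces a genuine non-polynomial power series in the remaining variables, then no polynomial correction $f_0$ can make $f - f_0$ divisible by $x_j$, and the subtraction-based preprocessing strategy cannot succeed with that choice of variable. The only subtlety worth flagging in the write-up is to make explicit that $f$ is regarded as an element of the power-series ring so that ``divisible by $x_j$'' has the meaning used above; this is consistent with the setting in Proposition \ref{pr:saf1}.
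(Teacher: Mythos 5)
Your proof is correct and follows essentially the same route as the paper: both reduce to substituting $x_j = 0$ into $f - f_0 = x_j g$ to obtain $f(\xx_j^\circ) = f_0(\xx_j^\circ)$, which is a polynomial. Your write-up simply makes the divisibility-implies-vanishing step explicit, which the paper leaves implicit.
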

\begin{proof}
If we substitute {$x_j=0$}, then we need {$f(\xx_j^\circ) - f_0(\xx_j^\circ)$} to be identically zero.
\end{proof}

Note that if we already satisfy H1 but still apply such an \emph{additive substitution}, then the truth value of H2 will not change. To see this, write $\tilde{f} = f - f_0$ and denote its minimal polynomial by $\tilde{P}$. Then $\partial \tilde{P}/\partial Y$ evaluated at the origin is equal to $\partial P/\partial Y$ evaluated at $\xx = 0, Y = f_0(\zero) = 0$. Thus in this case such a substitution is unnecessary if our goal is simply to embed. However as we see in Example \ref{eg:schroeder}, the nature of the rational function that we embed into may change, and this may be useful.

In all dimensions, the second method of \emph{multiplicative substitution} results in something satisfying H1. The Catalan example shows that we may also satisfy H2. However the situation seems quite tricky: 
%if we look at ternary trees instead of binary trees, we obtain 
the ternary tree analog of the shifted Catalan GF is defined by $Y^3 - xY + x^2 = 0$, and this does not satisfy H2. We do not pursue this approach further in the present article, for space reasons and because it seems less promising.

There is a third trick we can use. Assuming we already have $f(\zero) = 0$ but H1 is not satisfied, we can make a \emph{monomial substitution}. For example, in two variables we may have $(x,z) \mapsto (x,xz)$. This ensures that every term is now divisible by $x$, so H1 holds for the transformed GF $\tilde{f}(x,z) = f(x,xz)$. Furthermore there is no change in the truth value of H2, since the minimal polynomial of $\tilde{f}$ is $P(Y, x, xz)$ and this does not change the value of $\partial P /\partial Y$ evaluated at the origin. We do need to keep more careful track of the relation between the indices
in the original power series and the rational function.

%\subsection{Embedding is combinatorial after multiplying by a variable}

%The Catalan GF is one such example. Note that the Naryana example is not. \edit{whatever  to say about our standard examples?}

\subsection{Embedding is combinatorial after additive substitution}
\label{ss:additive}

In 6 of our 20 catalogued examples, we can immediately embed into a combinatorial GF once we remove the constant term, and in 4 examples a similar approach with more terms works. In addition to the examples shown below, the catalog examples \#8 (Eu; refinement of noncrossing partitions), \#9 (Do\v{s}lic et al.; RNA secondary structures), \#13 (Flajolet and Sedgewick; patterns in trees), \#14 (the Narayana numbers), \#16 (new and old leaves), and \#19 (Bousquet-M\'{e}lou \& Rechnitzer; bar graphs)  lie in this category. 

\subsubsection{A univariate algebraic family \cite[Example 4]{GrMW2021}}
\label{eg:callan}

% \edit{TG: I suggest not doing this one in full detail} 
As a warm-up we deal with a univariate problem.
In \cite{Call2007}, Callan studied the family
\[
G(x): = G_{a,b}(x) = \frac{1 - ax - \sqrt{1-2ax + (a^2 - 4b)x^2}}{2bx^2},
\]
for constants $a$ and $b$.  %This family includes functions for the Catalan numbers, Motzkin numbers, and little and big Schr\"{o}der numbers.  
Let $f(x) = G(x) - 1$, and observe that $f$ satisfies the minimal polynomial 
% \edit{tidy and collect}
\[
% P(x, Y) = Y^2x^2b + 2Yx^2b + Yxa + x^2b + xa - Y.
P({Y, x}) = x^2b Y^2 + (2bx^2 + ax - 1)Y + (x^2b + ax) .
\]
Since $P_Y(0, 0) \neq 0$, we can use Proposition \ref{pr:saf1} to embed $f$ in the GF
\[
F({Y, x}) = \frac{Y^2 P_Y({Y, xY})}{P({Y, xY})}
= \frac{{\left(1 - \left(2 \, Y^{3} b x^{2} + 2 \, Y^{2} b x^{2} + Y a x\right)\right)} Y}{1 - \left(Y^{3} b x^{2} + 2 \, Y^{2} b x^{2} + Y b x^{2} + Y a x + a x\right)}.
\]
We look at the coefficients of $F$ in the direction $\rr = [1, 1]$ because $[x^n] f(x) = [{Y^nx^n}] F({Y, x})$.  Let $H({Y, x})$ be the denominator of $F({Y, x})$.  Then, critical points satisfy the system of equations
$\left\{H = 0,\ \ xH_x - YH_Y = 0\right\}$.
When $b \neq a^2, a^2/4$, this system yields two points:
% \begin{align*}
\[
{(Y_1, x_1)} = \left(\frac{-\sqrt{b}}{(a - \sqrt{b})(a - 2\sqrt{b})}, -\frac{a \sqrt{b} - b}{b}\right),
{(Y_2, x_2)} = \left(\frac{\sqrt{b}}{(a + \sqrt{b})(a + 2\sqrt{b})}, \frac{a \sqrt{b} + b}{b}\right).
\]
% \end{align*}

All critical points are smooth because the system of equations $\{H = 0,\ \ xH_x - YH_Y = 0, H_x = 0\}$ has no solutions.  
%When $b \neq 0, \sqrt{a}$, the critical point $(x_2, y_2)$ is strictly closer coordinate-wise to $(0, 0)$, and thus it is the only critical point that potentially contributes to asymptotics.
The coordinates of {$(Y_2, x_2)$} are always positive for any choice of $a, b > 0$, and this point is strictly minimal.
%Between this and the fact that $H$ is of the form $1 - v(x, y)$ for a polynomial $v$ with only positive coefficients, it is clear that $(x_2, y_2)$ must be a \emph{minimal} critical point, meaning that there is no point $(p, q)$ where $H(p, q) = 0$ and $|p| < x_2, |q| < y_2$.
%
%Moreover, $(x_2, y_2)$ must be \emph{strictly minimal}, meaning additionally that there is no point $(p, q) \neq (x_2, y_2)$ where $H(p, q) = 0$ and $|p| = x_2, |q| = y_2$.  In this case, strict minimality is easy to check: because $H = 1 - v(x, y)$ for a polynomial $v$ with only positive coordinates and because $(x_2, y_2)$ is minimal, the only way that $H(p, q) = 0$ would be if every term of $v(p, q)$ was positive.  Since $v(x, y)$ has the term $xa$ in it, this implies that $p > 0$.  Since it also has the term $yxa$, this implies that $q > 0$ too.  This means that $(p, q) = (x_2, y_2)$, proving that $(x_2, y_2)$ is strictly minimal.
%
We now use, for example, \cite[Thm 5.2]{Melc2021} to find an asymptotic expansion for the coefficients of $F({Y, x})$.  %The Sage code accompanying Melcer's texbook \cite{Melc2021} automates this computation.
% Note that to drastically improve runtime in Sage by reducing the number of square roots involved in computations, we can temporarily make the substitution $a \to A^2, b \to B^2$.
We end up with 
\begin{equation} \label{eq:Example1-4-Asymptotics}
    [{Y^nx^n}] F({Y, x}) = \frac{b^{-3/4}}{2\sqrt{\pi}} n^{-3/2} (a + 2\sqrt{b})^{n + 3/2} + O(n^{-5/2}).
\end{equation}
Note that when $b = a^2$ or $b = a^2/4$, the non-dominant critical point is no longer relevant, but a similar analysis yields the same asymptotic formula.

We compare this to the asymptotic expansions derived using \cite[Cor. 2]{FlOd1990}.  The original GF $G(x)$ has a unique algebraic singularity at $x = \frac{1}{a + 2\sqrt{b}}$ for $a, b > 0$.  Additionally, if $\alpha_1 = \frac{1}{a + 2\sqrt{b}}$ and $\alpha_2 = \frac{1}{a - 2\sqrt{b}}$, then as $x \to \alpha_1$,
\[
\sqrt{1-2ax + (a^2 - 4b)x^2} \sim \sqrt{\alpha_1} \left(1 - \frac{x}{\alpha_1}\right)^{1/2} \sqrt{(\alpha_2 - \alpha_1)(a^2 - 4b)}.
\]
This implies that as $n \to \infty$
\[
[x^n] \sqrt{1-2ax + (a^2 - 4b)x^2} = -\frac{b^{1/4}}{\sqrt{\pi}}n^{-3/2} (a + 2\sqrt{b})^{n - 1/2} + O(n^{-5/2}).
\]
Plugging this expansion into the equation for $G(x)$ gives a matching asymptotic expression to Equation \eqref{eq:Example1-4-Asymptotics}.

\subsubsection{Dissections \cite[Example 10]{GrMW2021}}
\label{eg:dissections}
% Old Example 1.12
% \MW{A \emph{dissection} is a convex polygon split into fragments with a collection of non-crossing diagonals.}
Drmota \cite[p.376]{Drmo2009} enumerates dissections of polygons (where the polygons have a marked edge) using a bivariate GF $A(x,y)$, where $x$ counts the number of vertices in the polygon, and $y$ counts the total number of edges in the dissection.  %Drmota notes that dissections can also be viewed as $2$-connected outerplanar graphs with a rooted edge on their infinite face, and 
He gives the following minimal polynomial for $A$:
\[
A(x, y) = xy^2(1 + A(x, y))^2 + xy(1 + A(x, y)) \cdot A(x, y).
\]
Since $A$ is divisible by $x$, using Proposition \ref{pr:saf1} immediately we embed into 
$$F({Y, x, y}) = \frac{{\left(1 - \left(2 \, Y^{2} x y^{2} + 2 \, Y^{2} x y + 2 \, Y x y^{2} + Y x y\right) \right)} Y}{1 - \left(Y^{2} x y^{2} + Y^{2} x y + 2 \, Y x y^{2} + Y x y + x y^{2}\right)}.
$$
% \MW{Note that attaching $Y$ to $y$ also gives a combinatorial embedding, but where the denominator is of degree $3$ in $Y$.}
We note that $[x^{pn} y^{(1-p)n}]A(x, y) = [{Y^{pn}x^{pn}y^{(1-p)n}}] F({Y, x, y})$, and find that there is a single smooth critical point in this direction:
\[
{(Y, x, y)} = {\left(\frac{1 - 2 \, p}{p}, \frac{p(3p-1)^2}{(1-2p)^3}, \frac{(1-2p)^2}{(3p-1)(1-p)}\right)}.
\]
This critical point has positive coordinates when $1/3 < p < 1/2$.  This corresponds to the range of $p$-values where $A(x, y)$ has positive coefficients%
%\MW{: an $n$-gon can have between $0$ and $n - 2$ diagonals in its dissection, for a total of between $n$ and $2n - 2$ edges}
.   It is clear from the denominator of $F$ that in this case, the critical point is strictly minimal.  
Thus
% we obtain
for $1/3 < p < 1/2$:
\[
[x^{pn} y^{(1-p)n}]A(x, y) = \frac{\sqrt{1-p}}{2\pi p^2 \sqrt{3p - 1}} \cdot \frac{1}{n^2} \cdot \left(\frac{(1-p)^{1-p}}{(1-2p)^{2-4p}(3p-1)^{3p-1}}\right)^n + O\left(\frac{1}{n^3}\right).
\]

\if01
\subsubsection{Non-crossing partitions with $k$ visible blocks}
\label{eg:Eu}
% Old Example 1.8
Let $b_{n, k}$ be the number of circular non-singleton non-crossing partitions of $[n]$ with $k$ visible blocks. Eu presents \cite[Prop. 63, p.107]{Eu2002}:
\[
B(x, u) := \sum_{n, k \geq 0} b_{n, k} u^k x^n = \frac{1 + x - \sqrt{1 - 2x + x^2 - 4x^2u}}{2x(1 + xu)}.
\]
% Note that B(x, u) - 1 vanishes when $x=0$, and satisfies the minimal polynomial
% \[
% P(x, u, Y) = Y^{2} u x^{2} + 2 \, Y u x^{2} + Y^{2} x + u x^{2} + Y x - Y,
% \]
% and can thus be embedded in the trivariate generating function
% \[
% F(x, u, Y) = \frac{{\left(2 \, Y^{3} u x^{2} + 2 \, Y^{2} u x^{2} + 2 \, Y^{2} x + Y x - 1\right)} Y}{Y^{3} u x^{2} + 2 \, Y^{2} u x^{2} + Y u x^{2} + Y^{2} x + Y x - 1}.
% \]
Using Proposition \ref{pr:saf1}, $B(x, u) - 1$ can be embedded in a GF $F(x, u, Y)$ where $x$ is attached to $Y$.  We search for asymptotics of $[x^{np} u^{n(1-p)}] B(x, u)$.  For circular representations of partitions, every block is visible.  Thus, for a partition of $[n]$ with no singleton blocks, there are between $1$ and $\lfloor n/2\rfloor$ blocks.  This corresponds to $p \in (2/3, 1)$.
In the $(x, u, Y)$-direction  $\rr = (p, (1-p), p)$, there is a single smooth strictly minimal critical point with positive coordinates that leads to the asymptotic expansion
% 
% \[
% (x, u, Y) = \left(-\frac{6 \, p^{2} - 7 \, p + 2}{p^{2} - p}, \frac{p^{2} - 2 \, p + 1}{9 \, p^{2} - 12 \, p + 4}, -\frac{p - 1}{2 \, p - 1}\right).
% \]
% This leads to the asymptotic expansion
\[
[x^{np}u^{n(1-p)}] B(x, u) = \frac{\sqrt{p}}{2\pi (2p - 1)^2 \sqrt{3p - 2}} \cdot \frac{1}{n^2} \cdot \left(\frac{(p-1)^{2p - 2} p^p}{(3p-2)^{3p-2}} \right)^n + O\left(\frac{1}{n^3}\right).
\]

\fi

\subsubsection{Assembly trees \cite[Example 15]{GrMW2021}}
\label{eg:assembly}

% Old Example 1.21
B\'{o}na and Vince
\cite{BoVi2013} define the concept of \emph{assembly
tree} of a graph and show that the (exponential) GF for the number of
assembly trees of the complete bipartite graph $K_{rs}$ is 
% \MW{given by}
$$
f(x,y) = \sum_{rs}a_{rs}x^ry^s = 1 - \sqrt{(1-x)^2 + (1-y)^2 - 1}.
$$
Note that $f(0,y) = y$ and $f(x,0) = x$; indeed, the first order  Maclaurin expansion is $x+y$.
We replace $f$ by $\tilde{f}:=f - x - y$, which obviously has no effect on asymptotics of coefficients, and we can embed $\tilde{f}$ via Proposition \ref{pr:saf1} into
$$
F({Y, x, y}) = \frac{{\left(1 - \left(Y x + Y + y \right)\right)} Y}{1 - \left(Y x + x y + Y/2 + y\right)}.
$$
Here, we analyse the direction $\rr = [{p, p, (1-p)}]$ for variables $({Y, x, y})$ and $p \in (0, 1)$, and obtain a pair of smooth critical points, only one of which is strictly minimal with positive coordinates.
% \[
% (x, y, Y) = \left( \frac{2p - 1 + \sqrt{\xi}}{4(1-p)}, \frac{3}{2} - p - \frac{1}{2}\sqrt{\xi}, \sqrt{\xi} - 1\right).
% \]
Let $\xi := 1 + 4p - 4p^2$.  This leads to the asymptotic formula
% \begin{multline*}
% [x^{np} y^{n(1-p)}]f(x, y) = \frac{\sqrt{(2 + 4p - 4p^2)\sqrt{\xi} - 2\xi}\left(1 + \xi^{-1/2}\right)}{4\pi p(1-p)} \cdot \frac{1}{n^2} \cdot \\
% \left( \frac{(\sqrt{\xi} - 2p + 3)}{4(1-p)^2} \cdot \left( \frac{1 + (1-2p)\sqrt{\xi}}{4p^2}\right)^p \right)^n + O\left(\frac{1}{n^3}\right).
% \end{multline*}
\[
\frac{\sqrt{(1 + \xi)\sqrt{\xi} - 2\xi}\left(1 + \xi^{-1/2}\right)}{4\pi p(1-p)} \cdot \frac{1}{n^2} \cdot
\left( \frac{(\sqrt{\xi} - 2p + 3)}{4(1-p)^2} \cdot \left( \frac{1 + (1-2p)\sqrt{\xi}}{4p^2}\right)^p \right)^n + O\left(\frac{1}{n^3}\right).
\]

For example, when $p = 1/2$ we are looking at the diagonal case $K_{rr}$, and obtain
\[
[x^ny^n]f(x, y) = \frac{1}{\pi 2^{9/4}} \cdot \frac{1}{n^2} \cdot (6 + 4\sqrt{2})^n + O\left(\frac{1}{n^3}\right).
\]
This agrees with \cite[Thm 4.11]{BoVi2013} in the exponential rate, but not in the constant --- ours is correct, as we confirmed by numerical checks.

\subsubsection{Schr\"{o}der trees by leaves and vertices \cite[Example 17]{GrMW2021}}
\label{eg:schroeder}
Recall this class from the introduction. Using Proposition \ref{pr:saf1} on the minimal polynomial for $V$ (with respect to the variable $y$) will yield a noncombinatorial embedding.  Instead, we use the embedding $F({Y, x, y})$ for $V(x, y) - xy$.
% and its corresponding minimal polynomial,
% \[
% P = -{x}^{2}{y}^{3}-2\,Yx{y}^{2}-y{Y}^{2}-Yxy-{Y}^{2}+Y.
% \]
% This yields a combinatorial embedding attaching $x$ to $Y$:
% \[
% F(x, y, Y) = \frac{{\left(2 \, Y x y^{2} + Y x y + 2 \, Y y + 2 \, Y - 1\right)} Y}{Y x^{2} y^{3} + 2 \, Y x y^{2} + Y x y + Y y + Y - 1}.
% \]
Checking in the $({Y, x, y})$ direction $\rr = ({p, p, (1-p)})$ for $p \in (1/3, 1/2)$ (corresponding to non-zero coefficients) yields a single strictly minimal critical point with positive coordinates.
% \[
% (x, y, Y) = \left(\frac{p(3p-1)(1-p)}{(1-2p)^3}, \frac{(1-2p)^2}{3p-1)(1-p)}, \frac{(3p-1)(1-2p)}{p(1-p)}\right).
% \]
Then, we obtain
\[
[x^{np}y^{n(1-p)}]V(x, y) = \frac{\sqrt{3p - 1}}{2\pi p^2\sqrt{1-p}} \cdot \frac{1}{n^2} \cdot \left(\frac{(1-2p)^{4p-2}}{(1-p)^{p-1}(3p-1)^{3p-1}} \right)^n + O\left(\frac{1}{n^3}\right).
\]

\subsection{Embedding is combinatorial after a monomial substitution}
\label{ss:melczer trick}

When the trick of subtracting a polynomial from $f$ does not work, we can move to another level.
In 2 of our 20 examples, a monomial substitution is effective.
\subsubsection{Bivariate generalization of Catalan numbers \cite[Example 7]{GrMW2021}}
\label{sss:Cossali}

The next example comes from \cite{Coss2003}. 
Let $$g(n, m) = \frac{(2n + m)!}{m!n!(n + 1)!}.$$  %Fragments of this array encode Catalan numbers, triangular numbers, and other sequences.  The GF is
with corresponding GF 
\[
L(x, z) := \sum_{q = 0}^\infty \sum_{m = 0}^\infty g(m, q)x^m z^q = \frac{(1 - z) - \sqrt{(1 - z)^2 - 4x}}{2x}.
\]

Then $L(0,0) \neq 0$ and Proposition \ref{pr:necessary} rules out simple additive substitutions, since $L(x,0)$ is the Catalan GF and L'H\^{o}pital's rule shows that $L(0,z)$ is also not a polynomial. Instead we change variables by replacing $z$ by $xz$. Then $L(x, xz) - 1$ satisfies
\[
P = Y^{2} x + Y x z + 2 \, Y x + x z - Y + x,
\]
which leads to the embedding
\[
F({Y, x, z}) = \frac{1 - \left(2 Y^{2} x + Y x z + 2 Y x\right) Y}{1 - \left(Y^{2} x + Y x z + 2  Y x + x z + x\right)},
\]
where $[x^nz^k] L(x, z) = [{Y^{n + k}x^{n+k} z^n}] F({Y, x, z})$.  Rescaling so $n+k$ becomes $n$, we consider  $[x^{pn} z^{(1-p)n}] L(x, z)$ for a constant $p \in (0, 1)$  as $n \to \infty$.  This corresponds to analysing $[{Y^n x^n z^{(1-p)n}}]F({Y, x, z})$, giving us the direction $\rr = ({1, 1, (1-p)})$. 
% Let $H(x, z, Y) = Y^{2} x + Y x z + 2 \, Y x + x z + x - 1$ be the denominator of $F$.  Then, the smooth critical point equations are
% \[
% \left\{H = 0,\ \ (1-p)xH_x - zH_z,\ \ xH_x - YH_Y = 0\right\}.
% \]
% Using a Gr\"{o}bner basis, we get the single minimal smooth critical point,
% \[
% (x, z, Y) = \left(\frac{p^{3}}{p^{2} + 2 \, p + 1}, \frac{1 - p^{2}}{p^{2}}, \frac{1}{p}\right)
% \]
% It is again easy to verify that this point is strictly minimal because of the form of $H$.  Thus, we obtain
There is a single smooth strictly minimal critical point with positive coordinates in this direction, yielding
\[
[x^{pn} z^{(1-p)n}] L(x, z) = \frac{\sqrt{1 - p^2}}{2\pi (1-p)p^2} \cdot \frac{1}{n^2} \cdot \left( \frac{1 + p}{1-p} \cdot \frac{(1-p^2)^p}{p^{2p}}\right)^n + O\left(\frac{1}{n^3}\right).
\]

\subsubsection{Bicolored Motzkin Paths \cite[Example 18]{GrMW2021}}

In \cite[Lemma 2.1]{Eliz2021}, Elizalde derives the GF for bicolored Motzkin paths.  Here, such a path starts and ends at the $x$-axis, never passes below the $x$-axis, and takes steps $U = (1, 1), D = (1, -1)$, and two (colored) types of horizontal steps, $H_1 = (1, 0)$ and $H_2 = (1. 0)$.  Let $a_{m, n}$ be the number of such paths with $m$ total $U$ or $H_1$ steps and $n$ total $D$ or $H_2$ steps.  Then
\[
M(x, y) := \sum_{m, n = 0}^\infty a_{m, n} x^m y^n = \frac{1 - x - y - \sqrt{(1 - x- y)^2 - 4xy}}{2xy}.
\]
% Note that $M(x, xy) - 1$ satisfies the minimal polynomial
% \[
% P = {Y}^{2}{x}^{2}y+2\,Y{x}^{2}y+Yxy+{x}^{2}y+Yx+xy-Y+x
% \]
% and $M(x, xy) - 1$ can be embedded in the GF
% \[
% F(x, y, Y) = \frac{{\left(2 \, Y^{3} x^{2} y + 2 \, Y^{2} x^{2} y + Y x y + Y x - 1\right)} Y}{Y^{3} x^{2} y + 2 \, Y^{2} x^{2} y + Y x^{2} y + Y x y + Y x + x y + x - 1}.
% \]
% Then for $p \in (0, 1)$ and $(x, y, Y)$ in the direction $\rr = [1, (1-p), 1]$, there is a single smooth strictly minimal critical point with positive coordinates:
% \[
% (x, y, Y) = \left(\frac{(1-p)p^3}{p^2 - p + 1}, \frac{(1-p)^2}{p^2}, \frac{p^2 - p + 1}{p(1-p)} \right).
% \]
% This gives the asymptotic approximation
Using Proposition \ref{pr:saf1} on $M(x, xy) - 1$ gives the asymptotic formula
\[
[x^{pn} y^{(1-p)n}] M(x, y) = \frac{1}{2\pi (1-p)^2p^2} \cdot \frac{1}{n^2} \cdot \left( \frac{(1-p)^{2p - 2}}{p^{2p}}\right)^n + O\left(\frac{1}{n^3}\right).
\]

\if01
\subsubsection{General quadratic GFs}
Suppose that $F(\xx)$ is algebraic with minimal polynomial of the form $P(Y,\xx) = a(\xx) Y^2 + b(\xx)Y + c(\xx)$. A necessary condition for the hypotheses of 
Proposition \ref{pr:saf1} to hold is $b(\zero) \neq 0$.\edit{check}. 
\fi
\if01
\subsubsection{Leaves in trees}
Recall the Catalan and Narayana examples from the introduction. 
There is a further refinement \cite{ChDE2006} that considers different
types of leaves in a rooted ordered tree.  
Call a leaf of such a  tree \emph{old} if it is the leftmost
child of its parent, and \emph{young} otherwise.  They enumerate such trees
according to the number of old leaves, number of young leaves and number of
edges, finding the algebraic equation
$$
G(x,y,z) = 1 + \frac{z(G(x,y,z) - 1 + x)}{1 - z(G(x,y,z) - 1 + y)}.
$$
Thus $G$ is defined by the polynomial
$$
P(Y,x,y,z) = ***
$$
and we obtain the embedding 
$$
***
$$
\fi

\if01
\subsection{More difficult examples}

So far we have dealt with *** out of *** examples in \cite{GrMW2021}. Of the remaining ones in the catalog \cite{GrMW2021}, Proposition \ref{pr:saf1} applies for almost all \edit{how many}, but we have not yet found a combinatorial embedding in those examples. For example, bilateral Schr\"{o}der paths (see \url{https://oeis.org/A063007}) from $(0,0)$ to $(n,n)$ having $k$ North steps are enumerated by
$$G(t, z) = \frac{1}{\sqrt{1-2z-4tz+z^2}.}$$
The embedding $F({Y, t, z})$ found by using Proposition \ref{pr:saf1} and attaching $z$ to $Y$ yields no affine critical points in any relevant direction $\rr = [{(1-p), p, (1-p)}]$ for $p \in (0, 1)$.

For only *** of them does H2 fail to hold. Such cases can be treated by %the more powerful machinery of
Safonov's algorithm \cite{Safo2000}, which we intend to do in work currently in preparation. 
%The remaining cases yield, at this stage, noncombinatorial embeddings with affine critical points, and the difficulty is to prove that these points indeed contribute. 
%The recent work \cite{BMP2021} points the way.
\fi

\section{Further discussion}
\label{s:discuss}
We believe we have shown that our strategy for computing asymptotics of multivariate algebraic series is a good one. 
%Many cases occurring in applications work immediately using basic ACSV theory, and many others can be made to work with quite simple changes of variable. 
Further developments in ACSV theory, some already near completion, will allow analysis of noncombinatorial embeddings. We intend to take this up in a future work, and some cases will be challenging. For example, bilateral Schr\"{o}der paths 
% \MW{(see \url{https://oeis.org/A063007})}
from $(0,0)$ to $(n,n)$ having $k$ North steps are enumerated by
$$G(t, z) = \frac{1}{\sqrt{1-2z-4tz+z^2}.}$$
The embedding $F({Y, t, z})$ found by using Proposition \ref{pr:saf1} and attaching $z$ to $Y$ yields no affine critical points in any relevant direction.

% \MW{We note that sometimes all this machinery for algebraic functions is unnecessary. For example, every bivariate GF of the type $(1 - yv(x))^{-1}$ can be analysed via smooth point ACSV methods, with no need to consider the singularities of $v$, provided $v$ is analytic at the origin.} 

We note further that Furstenberg and Safonov are not the only relevant authors for embedding procedures. For example, Denef \& Lipshitz \cite{DeLi1987} give a somewhat more complicated method, which when applied to the example $x/\sqrt{1-x}$ yields, instead of the rather difficult to analyse embedding given by Proposition \ref{pr:furst}, the simple GF $2xY/(1-x-Y)$. This indicates a brighter future for the general program of computing multvariate asymptotics by reducing the 
algebraic to the rational case.

\section{Acknowledgements}
This material is based upon work supported by the National Science Foundation under Grant Numbers 1641020 and 1916439, an NSERC Discovery Grant, an NSERC USRA and the American Mathematical Society Mathematics Research Community, Combinatorial Applications of Computational Geometry and Algebraic Topology.

% \printbibliography

\bibliographystyle{plain}

\bibliography{mvgf.bib}

\begin{thebibliography}{10}

\bibitem{Raic}
Sagemath documentation: Asymptotics of multivariate generating series, 2021.

\bibitem{BeRi1983}
Edward~A. Bender and L.~Bruce Richmond.
\newblock Central and local limit theorems applied to asymptotic enumeration.
  {II}. {M}ultivariate generating functions.
\newblock {\em J. Combin. Theory Ser. A}, 34(3):255--265, 1983.

\bibitem{BoVi2013}
Mikl{\'o}s B{\'o}na and Andrew Vince.
\newblock The number of ways to assemble a graph.
\newblock In {\em 2013 Proceedings of the Tenth Workshop on Analytic
  Algorithmics and Combinatorics (ANALCO)}, pages 8--17. SIAM, 2013.

\bibitem{Call2007}
David Callan.
\newblock On generating functions involving the square root of a quadratic
  polynomial.
\newblock {\em Journal of Integer Sequences}, 10(2):3, 2007.

\bibitem{Chab2002}
C.~Chabaud.
\newblock {\em S{\'e}ries g{\'e}n{\'e}ratrices alg{\'e}briques:asymptotique et
  applications combinatoires}.
\newblock PhD thesis, Universit{\'e} {P}aris {VI}, 2002.

\bibitem{Coss2003}
GE~Cossali.
\newblock A common generating function for catalan numbers and other integer
  sequences.
\newblock {\em Journal of Integer Sequences}, 6(2):3, 2003.

\bibitem{DeLi1987}
J.~Denef and L.~Lipshitz.
\newblock Algebraic power series and diagonals.
\newblock {\em J. Number Theory}, 26(1):46--67, 1987.

\bibitem{Drmo2009}
Michael Drmota.
\newblock {\em Random trees: an interplay between combinatorics and
  probability}.
\newblock Springer Science \& Business Media, 2009.

\bibitem{Eliz2021}
Sergi Elizalde.
\newblock The degree of symmetry of lattice paths.
\newblock {\em Ann. Combin.}, pages 1--35, 2021.

\bibitem{FlOd1990}
Philippe Flajolet and Andrew Odlyzko.
\newblock Singularity analysis of generating functions.
\newblock {\em SIAM J. Discrete Math.}, 3(2):216--240, 1990.

\bibitem{FlSe2009}
Philippe Flajolet and Robert Sedgewick.
\newblock {\em Analytic combinatorics}.
\newblock Cambridge University Press, 2009.

\bibitem{Furs1967}
Harry Furstenberg.
\newblock Algebraic functions over finite fields.
\newblock {\em J. Algebra}, 7:271--277, 1967.

\bibitem{GaRi1992}
Zhicheng Gao and L.~Bruce Richmond.
\newblock Central and local limit theorems applied to asymptotic enumeration.
  {IV}. {M}ultivariate generating functions.
\newblock {\em J. Comput. Appl. Math.}, 41(1-2):177--186, 1992.
\newblock Asymptotic methods in analysis and combinatorics.

\bibitem{Gree2018}
Torin Greenwood.
\newblock Asymptotics of bivariate analytic functions with algebraic
  singularities.
\newblock {\em Journal of Combinatorial Theory, Series A}, 153:1--30, 2018.

\bibitem{GrMW2021}
Torin Greenwood, Stephen Melczer, Tiadora Ruza, and Mark~C. Wilson.
\newblock Algebraic multivariate generating functions: test problems.
\newblock 2021.

\bibitem{Melc2021}
Stephen Melczer.
\newblock {\em An Invitation to Analytic Combinatorics: From One to Several
  Variables}.
\newblock Springer Nature, 2021.

\bibitem{PeWi2013}
Robin Pemantle and Mark~C. Wilson.
\newblock {\em Analytic Combinatorics in Several Variables}.
\newblock Cambridge University Press, New York, 2013.

\bibitem{RaWi2012a}
Alexander Raichev and Mark~C. Wilson.
\newblock {A new approach to asymptotics of Maclaurin coefficients of algebraic
  functions}.
\newblock {\em arXiv.org}, 1202.3826, February 2012.

\bibitem{RaWi2012b}
Alexander Raichev and Mark~C. Wilson.
\newblock Asymptotics of coefficients of multivariate generating functions:
  improvements for multiple points.
\newblock {\em Online Journal of Analytic Combinatorics}, 7:25pp, 2012.

\bibitem{Safo2000}
K.~V. Safonov.
\newblock On power series of algebraic and rational functions in {${\bf C}\sp
  n$}.
\newblock {\em J. Math. Anal. Appl.}, 243(2):261--277, 2000.

\end{thebibliography}

\Addresses 
%
%\bibliographystyle{unsrt}
%
%\bibliography{mvgf.bib}

\end{document}